\documentclass{amsart}
\usepackage{graphicx} % Required for inserting images
\usepackage{amsthm, amsmath, amssymb, tikz, bm}
\usepackage{mathtools}
\usepackage{xifthen}
\usepackage{comment}
\usepackage[margin=4.5cm]{geometry}
\usepackage[shortlabels]{enumitem}
\usepackage[colorlinks=true,
linkcolor=blue,citecolor=blue,
urlcolor=blue]{hyperref}

\usetikzlibrary{calc,shapes, backgrounds}
\usetikzlibrary[patterns]

\newtheorem{theorem}{Theorem}[section]
\newtheorem{lemma}[theorem]{Lemma}
\newtheorem{sublemma}{}[theorem]
\newtheorem{corollary}[theorem]{Corollary}

\newcommand{\romannum}[1]{\romannumeral#1\relax}

\makeatletter
\newdimen\p@renwd \setbox0=\hbox{\phantom B} \p@renwd=\wd0
\def\bordersquare#1{\begingroup \m@th
  \setbox0=\vbox{\def\cr{\crcr\noalign{\kern2pt\global\let\cr=\endline}}
      \ialign{$##$\hfil\kern2pt\kern\p@renwd&\thinspace\hfil$##$\hfil
        &&\quad\hfil$##$\hfil\crcr
        \omit\strut\hfil\crcr\noalign{\kern-\baselineskip}
        #1\crcr\omit\strut\cr}}
  \setbox2=\vbox{\unvcopy0 \global\setbox1=\lastbox}
  \setbox2=\hbox{\unhbox1 \unskip \global\setbox1=\lastbox}
  \setbox2=\hbox{$\kern\wd1\kern-\p@renwd \left[ \kern-\wd1
    \global\setbox1=\vbox{\box1\kern2pt}
    \vcenter{\kern-\ht1 \unvbox0 \kern-\baselineskip} \,\right]$}
  \null\;\vbox{\kern\ht1\box2}\endgroup}
\makeatother

\newcommand{\cP}{\mathcal{P}}
\newcommand{\cQ}{\mathcal{Q}}
\newcommand{\cR}{\mathcal{R}}

\newcommand{\cT}{\mathcal{T}}

\newcommand{\GF}{\operatorname{GF}}

\newcommand{\W}{\mathcal{W}}

\makeatletter
\newcommand*{\rom}[1]{\expandafter\@slowromancap\romannumeral #1@}

\makeatother

\title[A pair of elements in no non-spanning circuits]{The connected binary matroids with a pair of elements in no non-spanning circuits}
\author{Wayne Ge}
\address{Mathematics Department \\
Louisiana State University \\ Baton Rouge, LA}
\email{yge4@lsu.edu}

\author{James Oxley}
\address{Mathematics Department \\
Louisiana State University \\ Baton Rouge, LA}
\email{oxley@math.lsu.edu}

\author{Jagdeep Singh}
\address{Department of Mathematics and Statistics \\
Mississippi State University \\ Mississippi State, MS}
\email{singhjagdeep070@gmail.com}

\date{\today}
\subjclass[2020]{05B35}
\keywords{Spanning circuits, binary matroids, connected matroids}

\begin{document}

\begin{abstract}
Let $M$ be a simple connected binary matroid, and let $e$ and $f$ be distinct elements of $M$. It is well known that, when the only circuits containing $e$ are spanning, $M$ is a circuit with at least three elements. This paper proves that if every circuit containing $\{e,f\}$ is spanning, then the canonical tree decomposition of $M$ is a path in which each vertex is labeled by a circuit, a copy of $U_{1,3}$, or a binary spike having one non-tip element deleted.
\end{abstract}
\maketitle

\section{Introduction}\label{sec: intro}

The terminology in this paper follows~\cite{Oxl11}. The following result is well known. A proof may be found in~\cite[Lemma~3.1(\romannum{1})]{oxsin}.

\begin{lemma}\label{lem: one element in no nonspanning circuits}
    Let $e$ be an element of a simple connected binary matroid $M$ with at least two elements. If every circuit containing $e$ is spanning, then $M$ is a circuit.
\end{lemma}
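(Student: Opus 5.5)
We argue through the rank of $M$ and the contraction $M/e$. Let $r=r(M)$. Since $M$ is connected and has at least two elements, $e$ is not a loop or coloop, and every element lies on a circuit with $e$. If $r=1$, simplicity and connectivity force $M\cong U_{1,2}$, a circuit. So we may assume $r\ge 2$, and the hypothesis says every circuit containing $e$ has exactly $r+1$ elements.

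The strategy is to show $M$ has no circuit avoiding $e$; combined with the hypothesis, this makes $E(M)$ itself the unique circuit. Suppose $D$ is a circuit with $e\notin D$. Because $M$ is connected, there are circuits through $e$ meeting $D$. Take such a circuit $C$ that is spanning, and consider $C\triangle D$. Since $M$ is binary, $C\triangle D$ is a disjoint union of circuits. It contains $e$, so one of these circuits, $C'$, contains $e$. By hypothesis $C'$ is spanning, so $|C'|=r+1$.

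The goal is to choose $C$ so that $C\triangle D$ is forced to be too small or too structured. A cleaner route is to use cocircuits. For any $x\neq e$, let $B$ be a basis containing $x$ but not $e$; such a basis exists because $e$ is not a coloop. The fundamental circuit $C(e,B)$ is spanning, so it equals $B\cup e$. Hence every basis $B$ avoiding $e$ has $B\cup e$ as a circuit. In particular, $E-(B\cup e)$ consists of elements $y$ with fundamental circuits $C(y,B)$ avoiding $e$.

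Now argue that $E=B\cup e$. Suppose $y\notin B\cup e$. Take $C(y,B)$, which avoids $e$. Pick $b\in C(y,B)$; then $B'=(B-b)\cup y$ is a basis avoiding $e$. So $B'\cup e$ is a circuit, as is $B\cup e$. By binarity, $(B\cup e)\triangle(B'\cup e)=\{b,y\}$ is a union of circuits. That makes $\{b,y\}$ a circuit, so $b$ and $y$ are parallel, contradicting simplicity. Therefore $E(M)=B\cup e$ is a circuit, as required.

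The main step, and the only delicate one, is justifying that the fundamental circuit $C(e,B)$ is all of $B\cup e$. This follows because a circuit contained in $B\cup e$ spans exactly when it has $r+1$ elements. Everything else is basis exchange plus the symmetric-difference property of binary matroids.
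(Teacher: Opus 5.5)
The paper does not prove this lemma; it quotes it as a known result from the literature, so there is no internal argument to compare against. Judged on its own, your basis-exchange proof is correct.

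\textbf{Why the argument works.} Both key steps are sound.
\begin{enumerate}
\item Take a basis $B$ avoiding $e$, which exists because $e$ is not a coloop. Then $C(e,B)$ is spanning, so it has $r+1$ elements and therefore equals $B\cup e$.
\item Suppose some $y\notin B\cup e$ existed. Exchanging $y$ for an element $b\in C(y,B)-\{y\}$ gives another basis $B'$ avoiding $e$. Then $(B\cup e)\triangle(B'\cup e)=\{b,y\}$ would be a nonempty disjoint union of circuits. So $M$ would have a loop or a parallel pair, which simplicity forbids.
\end{enumerate}
This second step is exactly where binarity enters, and it has to enter somewhere. Indeed, $U_{2,4}$ is simple and connected and every circuit of it is spanning, yet it is not a circuit.

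\textbf{Blemishes worth fixing.}
\begin{enumerate}
\item The case $r=1$ does not give $U_{1,2}$, because $U_{1,2}$ is not simple. A simple rank-one matroid has only one element, so the hypotheses already force $r\geq 2$.
\item In the exchange step, say explicitly that $b\neq y$. Such a $b$ exists because $y$ is not a loop.
\item The element $x$ in your choice of $B$ plays no role and can be deleted.
\item The unfinished $C\triangle D$ paragraph also plays no role and can be deleted.
\end{enumerate}

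\textbf{The abandoned route.} That paragraph can in fact be finished directly, giving the other natural proof. Let $C$ be a circuit through $e$; it is spanning, so $C-e$ is a basis. For $z\notin C$, let $D=C(z,C-e)$. Then $|C\triangle D|=r+3-|D|$. If $|D|\geq 3$, the circuit of $C\triangle D$ through $e$ has at most $r$ elements and so is non-spanning, a contradiction. Hence $|D|\leq 2$, which contradicts simplicity.

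Your exchange argument is the same idea, arranged so that the symmetric difference has exactly two elements. This removes the need for the size count.
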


Our main result determines precisely when some pair of elements in a connected binary matroid $M$ is contained in no non-spanning circuits. The statement of this theorem relies on Cunningham and Edmonds's~\cite{CE1980} canonical tree decomposition of a connected matroid, the details of which are given in Section~\ref{sec: canonical tree decompositions}. For $r\geq 3$, we denote by $Z_r$ the rank-$r$ binary spike with tip $t$ and ground set $\{x_1,y_1,x_2,y_2,\dots,x_r,y_r,t\}$ where $\{x_i,y_i,t\}$ is a triangle for each $i\in\{1,2,\dots,r\}$ (see~\cite[p.~473]{Oxl11}).

\begin{theorem}\label{thm: main (short)}
    Let $M$ be a simple connected binary matroid, and let $e$ and $f$ be distinct elements of $M$. Suppose that every circuit containing $\{e,f\}$ is spanning. Then the canonical tree decomposition of $M$ is a path $\cP$ every vertex of which is labeled by a circuit of size at least three, by $U_{1,3}$, or by $Z_r\backslash y_r$ for some $r\geq 3$, where the ends of $\cP$ are not labeled by $U_{1,3}$.
\end{theorem}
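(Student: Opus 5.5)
The plan is to induct on $|E(M)|$, using the canonical tree decomposition together with Lemma~\ref{lem: one element in no nonspanning circuits}. First reformulate the hypothesis. A circuit $C$ of $M$ is spanning exactly when $r(C)=r(M)$. Hence every circuit containing $\{e,f\}$ is spanning if and only if, for every hyperplane $H$ of $M$, the restriction $M|H$ has no circuit through both $e$ and $f$. Equivalently, for every cocircuit $D$ avoiding $\{e,f\}$, the elements $e$ and $f$ lie in different components of $M\backslash D$. This version is preserved under the operations we need, and it lets us compare $M$ with its $2$-sum parts.

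\textbf{Base case: $M$ is $3$-connected.} Here the goal is to show that $M$ is a circuit, is $U_{1,3}$, or is $Z_r\backslash y_r$ with $r\ge 3$.

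Take a cocircuit $D$ avoiding $\{e,f\}$ and consider $M\backslash D$. In a binary matroid, $e$ is in no non-spanning circuit through $f$. I would pick a cocircuit $D$ meeting $e$'s and $f$'s neighbourhoods minimally, and argue that $M\backslash D$ has exactly two components, one containing $e$ and one containing $f$.

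Now fix any hyperplane $H$ containing $e$ but not $f$. Consider the circuits of $M$ through $e$ and $f$. Each such circuit meets the complementary cocircuit in an even number of elements, and this parity constraint should give strong structure.

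My first concrete attempt is to contract $e$. The matroid $M/e$, after simplification, should satisfy the hypothesis of Lemma~\ref{lem: one element in no nonspanning circuits} with respect to $f$ on its relevant component. That would force $\operatorname{si}(M/e)$ to be essentially a circuit containing $f$, together with parallel classes. A rank-$(r-1)$ circuit with each element possibly doubled by the elements of triangles through $e$ is exactly what arises from a spike with tip $e$. Deleting one leg element is forced, because the full spike $Z_r$ has a non-spanning circuit $\{x_i,y_i,x_j,y_j\}$ through any two non-tip elements. The exception is when $e$ is the tip, and then $Z_r$ has circuits such as $\{e,x_1,y_1\}$... so some element must be missing.

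Since $M$ is $3$-connected, the only way a circuit plus parallel classes lifts is as a spike with tip $e$ or $f$, minus at least one $y_i$. We then need to show exactly one $y_r$ is missing, since otherwise $M$ would not be $3$-connected, or a small circuit through $e,f$ would appear. The degenerate cases give a circuit or $U_{1,3}$.

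\textbf{Inductive step: $M$ is not $3$-connected.} Write $M=M_1\oplus_2 M_2$ along a basepoint $p$. Every circuit of $M$ through $\{e,f\}$ is either a circuit of some $M_i$, or a union $(C_1-p)\cup(C_2-p)$ of circuits through $p$. Also $r(M)=r(M_1)+r(M_2)-1$.

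If $e$ and $f$ lie in the same part $M_1$, then $M_2$ can be replaced by the element $p$, and a circuit of $M_1$ avoiding $p$ is a non-spanning circuit of $M$. This forces $M_1$ to have the hypothesis with $p$ in a constrained position.

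If $e\in M_1$ and $f\in M_2$, the hypothesis says that every circuit of $M_1$ through $\{e,p\}$ and every circuit of $M_2$ through $\{f,p\}$ is spanning. So each part inherits the pair property, with pair $\{e,p\}$ or $\{f,p\}$.

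By induction, each part's decomposition is a path of the required type. The remaining work is to show that the tree of $M$ is a path whose ends contain $e$ and $f$, so that $e$ and $f$ sit at the two ends with the basepoints aligned along the path.

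\textbf{Gluing and the main obstacle.} The key gluing fact is that a vertex with three or more tree-edges, or an element hanging off the path, creates a non-spanning circuit through $e$ and $f$. To see this, route the circuit through only some of the branches, leaving rank unused.

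A $U_{1,3}$ vertex at an end would correspond to parallel elements or a series pair, which is excluded by simplicity and by canonicity.

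The main obstacle I expect is the $3$-connected classification. It requires showing rigorously that contracting $e$ reduces the problem to Lemma~\ref{lem: one element in no nonspanning circuits}, and that the only $3$-connected binary lifts are $Z_r\backslash y_r$. I would attempt this via the binary representation $[I\,|\,A]$, with $e$ and $f$ chosen as a basis element and a column of $A$ respectively.
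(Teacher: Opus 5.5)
\textbf{The main gap is the $3$-connected case.} It carries almost all of the content, and it is not proved. The one concrete reduction you propose does not hold.

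You claim that $\operatorname{si}(M/e)$ satisfies the hypothesis of Lemma~\ref{lem: one element in no nonspanning circuits} with respect to $f$. The hypothesis only constrains circuits containing both $e$ and $f$. A circuit $C$ of $M$ with $f\in C$ and $e\notin \mathrm{cl}_M(C)$ stays a circuit of $M/e$ of the same rank, and nothing prevents it from being small.

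The claim in fact fails in the very matroids you are trying to reach. Take $M=Z_r\backslash y_r$ with $r\ge 4$, $e=x_r$ (the cotip) and $f=t$. Then $M/x_r$ is simple, and $\{t,x_1,y_1\}$ is a circuit of it of rank $2<r-1$. So the reduction can only work if you contract the tip, and showing that one of $e,f$ must be a tip is essentially the whole classification. Your other ideas (a minimally chosen cocircuit, parity against hyperplanes) are never turned into arguments.

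The missing ingredient is the paper's excluded-minor route:
\begin{itemize}
\item the pair property passes to every minor containing $e$ and $f$ (Lemma~\ref{lem: preserve over taking minors});
\item every pair of elements of $M(\W_4)$ lies in a non-spanning circuit;
\item by Oxley--Reid, a $3$-connected binary matroid with an $M(\W_4)$-minor has one containing any prescribed pair.
\end{itemize}
Hence $M$ has no $M(\W_4)$-minor, and Oxley's classification leaves $Z_r$, $Z_r^*$, $Z_r\backslash t$ and $Z_r\backslash y_r$. Here $Z_r$ and $Z_r^*$ are excluded via $F_7$- and $F_7^*$-minors through the pair, and $Z_r\backslash t$ with $r\ge 4$ via $4$-element circuits through the pair.

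\textbf{The non-$3$-connected part also skips the key step.} Your heuristic of routing an $e$--$f$ circuit through fewer branches only works when some such circuit avoids the basepoint $p$ of a hanging branch $\cR$. The real difficulty is when every $e$--$f$ circuit of $M(\cQ)$ is forced through $p$. The paper handles this case as follows:
\begin{itemize}
\item every circuit of $M(\cR)$ through $p$ is spanning, and $M(\cR)$ is simple;
\item so Lemma~\ref{lem: one element in no nonspanning circuits} makes $\cR$ a single circuit vertex;
\item canonicity then forces the neighbouring vertex $N$ to be a cocircuit or $3$-connected, so $N\backslash p$, and hence $M(\cQ)\backslash p$, is connected;
\item this yields a non-spanning circuit through $e$ and $f$ avoiding $p$, a contradiction.
\end{itemize}

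\textbf{The induction is also not set up correctly.} An induction over arbitrary $2$-sums using the theorem as stated cannot glue. That statement does not record that $e$, $f$ and the basepoints lie in the end vertices. Nor is the canonical tree of a $2$-sum simply obtained by joining the canonical trees of the parts. The paper avoids induction altogether. It works with the canonical tree directly, notes that each vertex inherits the pair property for its two distinguished elements, and applies the $3$-connected theorem vertex by vertex.
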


A more detailed statement of this result appears in Section~\ref{sec: proof of the main thm} along with a proof of the result. If we drop the requirement that $M$ is simple from Theorem~\ref{thm: main (short)}, the only change to the result is that we may add arbitrarily many elements in parallel to each element of the matroid described there.

In addition to its intrinsic interest, Theorem~\ref{thm: main (short)} is motivated by Ge's work~\cite{unavoidable_flats_regular} on unavoidable flats of large connected regular matroids. The main result of that paper is the following.

\begin{theorem}\label{thm:large graphic flat}
    For every positive integer $k$, there is an integer $f(k)$ such that every connected regular matroid of rank at least $f(k)$ has a connected graphic flat of rank at least $k$.
\end{theorem}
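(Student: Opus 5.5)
The plan is to induct downward through a chain of connected flats. Each step either loses only a bounded amount of rank, or lands on a flat whose structure is pinned down by Theorem~\ref{thm: main (short)}, where a large connected graphic flat can be read off directly. The basic observations are these. If $F$ is a flat of $M$, then every flat of $M|F$ is a flat of $M$. For any circuit $C$, the restriction $M|\operatorname{cl}(C)$ is connected. So it suffices to find a connected flat $F$ of large rank such that $M|F$ contains a connected graphic flat of rank at least $k$. Since $M$ is regular, every restriction to a flat is regular, and the whole argument stays inside the class.

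\textbf{Step 1 (the structured case).} Suppose $N$ is a connected regular matroid of rank $r$, and $N$ has distinct elements $e,f$ such that every circuit of $N$ containing $\{e,f\}$ is spanning. After simplifying, Theorem~\ref{thm: main (short)} applies: the canonical tree decomposition of $N$ is a path whose vertices are labelled by circuits, copies of $U_{1,3}$, or $Z_s\backslash y_s$. I would first check which $Z_s\backslash y_s$ are regular; contracting down to $Z_3\backslash y_3$ suggests these labels are severely restricted. I would then verify that each surviving label is graphic. Since 2-sums of graphic matroids are graphic, $N$ itself would then be graphic, and $N$ is its own connected graphic flat of rank $r$. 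If some spike labels survive but are non-graphic, the fallback is different. Such a label has bounded rank, because it is regular. So, with boundedly many exceptions in number of labels and rank, the path contains a long run of circuit and $U_{1,3}$ vertices. The closure in $N$ of a circuit threading through that run should be a connected flat that is a 2-sum of circuits and triangles, hence graphic, and of large rank.

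\textbf{Step 2 (descent).} If $M$ is not as in Step 1, then for every pair $e,f$ there is a non-spanning circuit $C\supseteq\{e,f\}$. In that case $F=\operatorname{cl}(C)$ is a proper connected flat. I would choose $e,f$ and $C$ so that $r(F)$ is as large as possible, replace $M$ by $M|F$, and repeat. The process must stop, and when it stops we are in Step 1 for the current flat. Defining $f(k)$ recursively from the bound in Step 1 then finishes the proof, provided each descent step loses only a bounded amount of rank, or at least loses rank by a bounded factor.

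\textbf{Main obstacle.} The difficulty is controlling the rank drop in Step 2. A priori, every non-spanning circuit through a given pair might have tiny rank. To handle this, I would first try choosing $e,f$ inside a largest non-spanning connected flat, that is, a connected hyperplane-like piece. The tool for this is Seymour's decomposition of regular matroids into $1$-, $2$- and $3$-sums of graphic matroids, cographic matroids, and copies of $R_{10}$. It should show that a connected regular matroid of large rank has a hyperplane $H$ such that some component of $M|H$ has rank at least $c\cdot r(M)$. The decomposition is used only to bound how many components a hyperplane can split into. If that bound holds, the descent loses at most a constant factor per step. Taking $f(k)$ exponential in the number of steps, and large enough to cover Step 1, then completes the argument.
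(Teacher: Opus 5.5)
Step~1 is sound; it is essentially Corollary~\ref{cor: regular matroids}. For $s\ge 4$, $Z_s\backslash y_s$ has an $F_7$-minor, so the only spike label available to a regular matroid is $Z_3\backslash y_3\cong M(K_4)$. Hence $N$ is graphic and your fallback is never needed.

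The genuine gap is in Step~2. Your descent has no bound on its number of steps in terms of $k$, so no per-step control of the rank loss, additive or multiplicative, can yield $f(k)$. The stopping criterion (some pair lies in no non-spanning circuit) is unrelated to rank, and the process can run for about $r(M)$ steps. Take $M=M(K_n)$ with $n\ge 5$:
\begin{itemize}
\item Any two edges lie in a triangle or a $4$-cycle, so Step~1 does not apply.
\item Your rank-maximizing choice gives $\operatorname{cl}(C)\cong M(K_{n-1})$, a loss of one unit of rank, the best possible.
\item The process continues down to $K_4$, where a matching pair lies only in the two Hamiltonian $4$-cycles, and it stops there.
\end{itemize}
So your procedure outputs a flat of rank~$3$, although $M$ is itself a connected graphic flat of rank $n-1$. ``Taking $f(k)$ exponential in the number of steps'' presupposes a bound that does not exist. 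Separately, the hyperplane lemma you hope to extract from Seymour's decomposition is false as stated: in a large circuit every proper flat is independent. Circuits are excluded by Step~1, but the restricted version is unproved, and even if true the previous point still defeats the argument.

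What is missing is a mechanism that forces the terminal flat to have large rank. The natural way to use Corollary~\ref{cor: regular matroids} is not to iterate. Instead, fix a pair $\{e,f\}$ and take a circuit $C\supseteq\{e,f\}$ of \emph{minimum} size. Then every circuit of $M|\operatorname{cl}(C)$ containing $\{e,f\}$ has rank at least $|C|-1=r(\operatorname{cl}(C))$, so it is spanning. Thus $\operatorname{cl}(C)$ is at once a connected graphic flat of rank $|C|-1$. This reduces the theorem to its real content: either some pair of elements (possibly inside a large connected flat) has all common circuits large, or $M$ has a large connected graphic flat for another structural reason. The $K_n$ example shows the second case genuinely occurs, and your proposal contains no argument for this dichotomy.

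The paper itself does not prove this theorem. It quotes it from Ge's separate work and says only that Corollary~\ref{cor: regular matroids} is a key ingredient, so there is no in-paper proof to compare against. As written, your outline does not establish the result.
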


A key ingredient in the proof of this theorem is the specialization of Theorem~\ref{thm: main (short)} to regular matroids. The statement of this corollary uses a special class of graphs called clean ladders. These were introduced by Allred, Ding, and Oporowski~\cite{SDO2020}, who identified such graphs as one of the families of unavoidable induced subgraphs for large $2$-connected simple graphs. Let $B_3$ denote the graph that consists of two vertices joined by three edges. A graph $L$ is a \emph{clean ladder} if it can be constructed via a path decomposition whose vertices are labeled, in order, by $G_0,G_1,\dots, G_n$ where
\begin{enumerate}
    \item[(\romannum{1})] each $G_i$ is isomorphic to $K_4$, $K_3$, or $B_3$;
    \item[(\romannum{2})] each edge of the path is the basepoint of the $2$-sum between its endpoints;
    \item[(\romannum{3})] neither $G_0$ nor $G_n$ is isomorphic to $B_3$;
    \item[(\romannum{4})] no two consecutive vertices of the path are both isomorphic to $B_3$; and
    \item[(\romannum{5})] whenever $0<i<n$ and $G_i\cong K_4$, the two edges of $G_i$ that are basepoints of $2$-sums form a matching.
\end{enumerate}

\begin{corollary}\label{cor: regular matroids}
    Let $M$ be a simple connected regular matroid, and let $e$ and $f$ be distinct elements of $M$. Suppose that every circuit of $M$ containing both $e$ and $f$ is spanning. Then $M\cong M(L)$ for some clean ladder $L$.
\end{corollary}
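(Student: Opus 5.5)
Since $M$ is regular, it is in particular binary, so Theorem~\ref{thm: main (short)} applies. Hence the canonical tree decomposition of $M$ is a path $\cP$ whose vertices, in order, are labeled by matroids $N_0,N_1,\dots,N_n$. Each $N_i$ is a circuit of size at least three, a copy of $U_{1,3}$, or $Z_r\backslash y_r$ for some $r\ge 3$, and neither $N_0$ nor $N_n$ is a copy of $U_{1,3}$. The plan is to show that regularity forces every spike label to be $Z_3\backslash y_3\cong M(K_4)$. The resulting path of graphic labels then $2$-sums to a graph, and we check that this graph can be put in the form of a clean ladder.

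\textbf{Step 1.} Each label $N_i$ is isomorphic to a minor of $M$, so each $N_i$ is regular. For $r\ge 4$, the matroid $Z_r\backslash y_r$ contains a minor isomorphic to $F_7$ or $F_7^*$. For example, contracting $x_4,\dots,x_{r-1}$ and deleting suitable elements leaves a rank-$4$ binary spike minus a non-tip element, and this contracts further to $F_7$ or has an $F_7^*$-minor. Hence only $Z_3\backslash y_3$ can occur. The matroid $Z_3$ is $F_7$, and $F_7$ with a non-tip element deleted is $M(K_4)$. I expect this step to be the main obstacle. One must check carefully which spikes are meant, since $Z_r$ is the binary spike whose only triangles through $t$ are the legs, and then exhibit the excluded minor explicitly. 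If the route through $F_7$ fails, I would try to find $F_7^*$ directly as $Z_4\backslash y_4$ itself, since $Z_4\backslash y_4$ has $8$ elements and rank $4$.

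\textbf{Step 2.} Each circuit label $C_k$ with $k\ge 3$ is $M$ of a $k$-cycle. Each $U_{1,3}$ label is $M(B_3)$, and each spike label is now $M(K_4)$. Since $2$-sums of graphic matroids along basepoints are realized by $2$-sums of graphs, $M\cong M(G)$ for the graph $G$ obtained by gluing these graphs along the basepoint edges of $\cP$ and deleting those edges. Conditions (\romannum{1})--(\romannum{4}) of a clean ladder are not yet satisfied as stated. Labels may be cycles longer than three, and in a canonical decomposition consecutive labels are never both circuits or both cocircuits. To fix this, I would refine the path by splitting each cycle label $C_k$ with $k>3$ into a path of triangles and copies of $B_3$ arranged alternately. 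Concretely, $C_k$ is the $2$-sum of $C_{k-1}$ and $C_3$, and we insert $U_{1,3}$ between triangles where needed, so that each basepoint is used once. The refined path still $2$-sums to the same matroid, with triangles at its ends. The original ends are not $U_{1,3}$, and neither are the new ones, so condition (\romannum{3}) holds. Condition (\romannum{4}) holds because $B_3$ labels are never adjacent in a canonical decomposition, and our insertions keep them separated.

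\textbf{Step 3.} It remains to check condition (\romannum{5}) for every interior vertex labeled $M(K_4)$. Suppose the two basepoints in such a copy of $K_4$ share a vertex, so that they lie in a common triangle $T$. Then I expect to produce a non-spanning circuit of $M$ through $\{e,f\}$, contradicting the hypothesis. Alternatively, this may already be encoded in the detailed form of Theorem~\ref{thm: main (short)}, where the basepoints of $Z_3\backslash y_3$ at an interior vertex are presumably specified, for instance as $\{x_1,x_2\}$-type non-tip elements in distinct legs. In $K_4$ such a pair is a matching precisely when the two elements are not in a common triangle. If the detailed statement is unavailable, the direct argument is as follows. In a spike label, elements $e$ and $f$ (or the basepoints leading towards them) must be positioned so that every circuit through both is spanning. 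A pair of basepoints in a common triangle would give a short circuit in that label, which would lift through the neighboring parts to a circuit of $M$ containing $e$ and $f$ but missing an element. This contradiction yields (\romannum{5}).
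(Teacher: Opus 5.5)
\textbf{Overall.} Your skeleton is the paper's: apply the structure theorem, use regularity to exclude $Z_r\backslash y_r$ for $r\ge 4$ by an $F_7$-minor, and read off a clean ladder. Your main route in Step~1 is correct. Indeed $Z_r\backslash y_r/x_r\cong Z_{r-1}$, which has a $Z_3\cong F_7$ minor. Your fallback is not viable, since $Z_4\backslash y_4$ has eight elements and cannot be $F_7^*$, but you do not need it.

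\textbf{Step 2 fails as written.} Splitting a cycle label $C_k$ into triangles \emph{alternating with copies of $B_3$}, or inserting $U_{1,3}$ between triangles, does not preserve the matroid. An interior $U_{1,3}$ vertex has only two basepoints, so its third element becomes a new element of the $2$-sum. For example, the $2$-sum along a path of a triangle, a $U_{1,3}$, and a triangle is the cycle matroid of a five-edge theta graph, not the $4$-circuit $U_{3,4}$. So your claim that the refined path ``still $2$-sums to the same matroid'' is false.

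The alternation you are trying to enforce is a property of canonical decompositions, not of clean ladders. Condition (iv) forbids adjacent copies of $B_3$ but allows adjacent copies of $K_3$. The correct refinement replaces $C_k$ by a path of $k-2$ triangles only, with the (at most two) basepoints of $C_k$ placed in distinct end triangles of that subpath. Then the only $B_3$ vertices are the original $U_{1,3}$ labels. These are non-adjacent, by canonicity, and are not ends, so (iii) and (iv) hold. The paper's proof leaves this splitting implicit, so you were right to raise it; your fix is just wrong.

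\textbf{Step 3.} Your direct argument needs $e$ and $f$ to lie on opposite sides of each interior vertex. Theorem~\ref{thm: main (short)} does not give you that; it comes from part (iv) of Theorem~\ref{thm: main (detailed)}. Once you use that theorem, the direct argument is unnecessary. Part (iv)(a) says the two basepoints of an interior copy of $Z_3\backslash y_3$ are its tip $t$ and cotip $x_3$. The triangles of $Z_3\backslash y_3$ are $\{x_1,y_1,t\}$, $\{x_2,y_2,t\}$, $\{x_1,x_3,y_2\}$, and $\{x_2,x_3,y_1\}$, and none contains $\{t,x_3\}$. So under any isomorphism with $M(K_4)$ the two basepoints form a matching, which gives (v). This is exactly how the paper concludes.
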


This has the following immediate consequence for graphs.

\begin{corollary}
    Let $G$ be a simple $2$-connected graph, and let $e$ and $f$ be distinct edges of $G$. If every cycle of $G$ containing both $e$ and $f$ is Hamiltonian, then $G$ is a clean ladder.
\end{corollary}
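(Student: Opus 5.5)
We derive the graph corollary directly from Corollary~\ref{cor: regular matroids}, applied to the cycle matroid $M = M(G)$.

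\emph{Checking the hypotheses.} Since $G$ is simple, $M(G)$ is simple. Since $G$ is $2$-connected and has at least two edges, $M(G)$ is connected. Graphic matroids are regular, so $M(G)$ is regular. The circuits of $M(G)$ are the edge sets of cycles of $G$. Because $G$ is connected, a circuit is spanning in $M(G)$ exactly when its rank is $|V(G)|-1$. A cycle $C$ on $m$ vertices has rank $m-1$, so it is spanning exactly when $m=|V(G)|$, that is, when $C$ is Hamiltonian. The hypothesis therefore says that every circuit of $M(G)$ containing both $e$ and $f$ is spanning.

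\emph{Applying the corollary.} Corollary~\ref{cor: regular matroids} now gives $M(G)\cong M(L)$ for some clean ladder $L$.

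\emph{Returning to graphs.} It remains to show that $G$ itself is a clean ladder. By Whitney's $2$-Isomorphism Theorem, $G$ is obtained from $L$ by a sequence of vertex identifications, vertex cleavings and twistings. Since $G$ and $L$ are both $2$-connected and loopless, only twistings are needed.

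This is the one step that needs care. A twisting is performed at a $2$-separation $\{u,v\}$ of $L$. Every $2$-separation of $L$ comes from its $2$-sum decomposition: it occurs at a basepoint of a $2$-sum, or inside a part labeled $K_3$, or inside a part labeled $B_3$ (the latter two being series and parallel classes). Twisting at such a separation reverses the orientation in which one side is glued along the basepoint edge $uv$. The pieces $K_3$, $K_4$ and $B_3$ each have an automorphism that swaps the two ends of any chosen edge. Hence the twisted graph again has a path decomposition with the same labels $G_0,\dots,G_n$, the same basepoints, and the same matching condition (\romannum{5}). So it is again a clean ladder.

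If we want to avoid this local check, we can instead use the observation that the definition of a clean ladder is phrased purely in terms of a $2$-sum decomposition of the cycle matroid. Each matroid $2$-sum of graphic pieces is realized by some graph built from those pieces, and different realizations differ only by twistings of the kind just described. Either way, $G$ is a clean ladder.
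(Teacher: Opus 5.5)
Your translation to $M(G)$ (simple, connected, regular, with Hamiltonian cycles corresponding to spanning circuits), the appeal to Corollary~\ref{cor: regular matroids}, and the reduction via Whitney's theorem to twistings are all fine. The gap is in the step you flag as delicate: your list of $2$-vertex cuts of $L$ is incomplete. Consecutive pieces labeled $K_3$ glue into a longer cycle, and such a cycle has $2$-cuts that are neither the ends of a basepoint nor contained in a single piece.

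For instance, let $G_i$ and $G_{i+1}$ be triangles glued along $uw$ into the $4$-cycle $uvwx$. Take the outer basepoints to be $uv$ (toward $G_{i-1}$) and $wx$ (toward $G_{i+2}$), so $vw$ and $xu$ are edges of $L$. Then $\{v,x\}$ is a $2$-cut of $L$, yet $v$ and $x$ lie in different pieces and $vx$ is not a basepoint. Twisting at $\{v,x\}$ gives a graph in which the left part hangs on $uv$, the right part hangs on $vw$, and the remaining edges are $wx$ and $xu$. Now the triangle $\{uv,vw,uw\}$ would contain both outer basepoints, so the decomposition with inner basepoint $uw$ is no longer available; you must use the chord $vx$ instead. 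Thus ``same labels, same basepoints'' fails here.

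Your fallback does not rescue this. What you need is that every $2$-connected graph with the given cycle matroid is a graph $2$-sum of the pieces. For a fixed splitting into triangles that is false by the same example: the edge formerly $vw$ becomes $xw$, which is disjoint from $\{u,v\}$ and so cannot lie in a triangle with $uv$.

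The conclusion is still true, and the repair is short. Merge each maximal run of consecutive $K_3$ pieces into one cycle; these are the circuit vertices of the canonical decomposition in Theorem~\ref{thm: main (detailed)}. The nontrivial $2$-cuts of $L$ are then of two kinds: the ends of the remaining basepoints, and pairs of non-adjacent vertices on one of these cycles. Twisting at the latter merely reverses an arc of the cycle, permuting the edges and attached parts around it. A cycle with one or two distinguished edges can always be cut by chords into a path of triangles whose end triangles contain the distinguished edges. Hence the twisted graph is again a clean ladder. Conditions (\romannum{3})--(\romannum{5}) are unaffected, because the $B_3$ and $K_4$ pieces and their basepoints do not change.

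Alternatively, skip Whitney's theorem altogether. For a $2$-connected graph $G$, the canonical tree decomposition of $M(G)$ is realized by graph $2$-sums inside $G$ itself (Tutte's decomposition). So the path from Theorem~\ref{thm: main (detailed)} can be read off in $G$ directly, splitting each cycle vertex into triangles according to its cyclic order in $G$. This is essentially why the paper regards the corollary as immediate.
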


Ge and Oxley~\cite{cc-minors} showed that the cycle matroids of clean ladders are one of the families of unavoidable flats for large simple connected cographic matroids. Ge~\cite{unavoidable_flats_regular} used Corollary~\ref{cor: regular matroids} to prove that the cycle matroids of clean ladders are one of the families of unavoidable flats of large simple connected regular matroids.

In Section~\ref{sec: canonical tree decompositions}, we describe Cunningham and Edmonds's~\cite{CE1980} tree decomposition of a connected matroid. Section~\ref{sec: the 3-con case} proves Theorem~\ref{thm: main (short)} when $M$ is $3$-connected. Finally, in Section~\ref{sec: proof of the main thm}, we prove a strengthened version of Theorem~\ref{thm: main (short)}, characterizing all simple connected binary matroids having a pair $\{e,f\}$ of elements that are contained in no non-spanning circuits.

\section{Canonical tree decompositions}\label{sec: canonical tree decompositions}
Theorem~\ref{thm: main (short)} uses the canonical tree decomposition of a connected matroid. In this section, we describe this decomposition. It is well known that a connected matroid $M$ that is not $3$-connected can be written as a $2$-sum $M_1\oplus_2 M_2$ of two connected matroids. If $M_1$ or $M_2$ is not $3$-connected, it too can be decomposed as a $2$-sum. Repeating this process, we can eventually decompose $M$ into $3$-connected matroids, circuits, and cocircuits. Cunningham and Edmonds~\cite{CE1980} recorded this decomposition via a tree, each vertex of which is labeled by a matroid. A detailed description of this decomposition may be found in~\cite[pp.~307--310]{Oxl11}. Formally, a \emph{tree decomposition} of a connected matroid $M$ is a tree $\cT$ with vertex set $\{M_1,M_2,\dots,M_k\}$ where each $M_i$ is a connected matroid and $E(\cT)=\{e_1,e_2,\dots,e_{k-1}\}$. Moreover,
\begin{enumerate}
    \item[(\romannum{1})] $E(M)=(E(M_1)\cup E(M_2)\cup\dots\cup E(M_k))-\{e_1,e_2,\dots,e_{k-1}\}$;
    \item[(\romannum{2})] if $M_{j_1}$ and $M_{j_2}$ are joined by an edge $e_i$ then $E(M_{j_1})\cap E(M_{j_2})=\{e_i\}$; otherwise $E(M_{j_1})\cap E(M_{j_2})=\emptyset$;
    \item[(\romannum{3})] $|E(M_i)|\geq 3$ for all $i$ unless $|E(M)|<3$, in which case, $k=1$ and $M_1=M$; and
    \item[(\romannum{4})] $M$ can be obtained from $\cT$ by repeatedly taking an edge $e_i$ with ends $M_{j_1}$ and $M_{j_2}$, contracting $e_i$ from the current tree and labeling the resulting composite vertex by $M_{j_1}\oplus_2 M_{j_2}$. In particular, $M$ corresponds to the single-vertex tree $\cT/e_1,e_2,\dots,e_{k-1}$. 
\end{enumerate}
Cunningham and Edmonds~\cite{CE1980} proved that every connected matroid $M$ has a tree decomposition in which every vertex is labeled by a circuit, a cocircuit, or a $3$-connected matroid with at least four elements, and no two adjacent vertices are both labeled by circuits or are both labeled by cocircuits. This tree, which is unique up to its edge labels, is called the \emph{canonical tree decomposition} of $M$.

\section{The 3-connected case}\label{sec: the 3-con case}

In this section, we prove Theorem~\ref{thm: main (short)} when $M$ is $3$-connected. The next lemma is an immediate consequence of a result of Oxley and Reid~\cite[Theorem~1.4(\romannum{1})]{Oxley_Reid}.

\begin{lemma}\label{lem: roundness of 4-wheel}
    Let $M$ be a $3$-connected binary matroid with an $M(\W_4)$-minor. If $e$ and $f$ are distinct elements of $M$, then $M$ has a minor $N$ such that $N\cong M(\W_4)$ and $\{e,f\}\subseteq E(N)$.
\end{lemma}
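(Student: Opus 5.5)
The plan is to derive this directly from the Oxley--Reid result, and to fall back on a splitter-type argument only if the form of their theorem needs translating. Oxley and Reid~\cite[Theorem~1.4(\romannum{1})]{Oxley_Reid} determine which small binary matroids are \emph{$2$-rounded}. A $3$-connected matroid $N$ is $2$-rounded, relative to a class of matroids, if it has the following property: whenever $M$ is a $3$-connected matroid in the class having an $N$-minor, and $X$ is any set of at most two elements of $M$, there is an $N$-minor of $M$ whose ground set contains $X$. Their Theorem~1.4(\romannum{1}) asserts that $M(\W_4)$ has this property within the class of binary matroids. So the first step is to check that our hypotheses match theirs. Here $M$ is binary and $3$-connected, it has an $M(\W_4)$-minor, and $X=\{e,f\}$ has size two. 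The conclusion is then exactly that some minor $N\cong M(\W_4)$ of $M$ has $\{e,f\}\subseteq E(N)$.

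If instead their theorem is phrased through Seymour's criterion for roundness, I would unpack it as follows. By Seymour's splitter-type result, each element of a $3$-connected $M$ with a $3$-connected $N$-minor lies in a minor isomorphic to a single-element extension or coextension of $N$. Roundness for pairs then reduces to a finite check. For every $3$-connected binary matroid $N'$ obtained from $M(\W_4)$ by at most two single-element extensions or coextensions, one must verify that every pair of elements of $N'$ lies in an $M(\W_4)$-minor. To organise this, I would first take $N_1$ containing $e$, and then apply the one-element result to $f$ relative to $N_1$. This yields a $3$-connected minor of size at most $|E(\W_4)|+2$ containing both $e$ and $f$. The remaining task is then the finite enumeration of binary extensions and coextensions of $M(\W_4)$, for example $M(\W_5)$, $M(K_5)\backslash e$, $M^*(K_{3,3})$, and their relatives.

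The main obstacle in this fallback route is the reduction from two elements to that bounded-size minor. After fixing an $N_1$-minor containing $e$, applying Seymour's result for $f$ may produce a minor that loses $e$. I would handle this by using the stronger form of the rounding argument, which is exactly what Oxley and Reid establish. So, in the write-up, I would cite their Theorem~1.4(\romannum{1}) directly rather than redo the case analysis.
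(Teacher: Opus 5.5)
Your proposal matches the paper, which likewise treats this lemma as an immediate consequence of Oxley and Reid's Theorem~1.4(\romannum{1}): $M(\W_4)$ is $2$-rounded in the class of binary matroids, applied here with $X=\{e,f\}$. The Seymour-style fallback you sketch is not needed, and, as you note yourself, the naive two-step version could lose $e$, so citing the theorem directly is exactly right.
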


Recall from the introduction that $Z_r$ denotes the binary $r$-spike with tip $t$ where $y_r$ is some element other than $t$. The following theorem of Oxley~\cite[Theorem~(2.1)]{Oxley_no_W4} characterizes the $3$-connected binary matroids with no $M(\W_4)$-minor; see also~\cite[Theorem~12.2.21]{Oxl11}.

\begin{theorem}\label{thm: no W4-minor}
    Let $M$ be a binary matroid. Then $M$ is $3$-connected with no $M(\W_4)$-minor if and only if one of the following holds.
    \begin{enumerate}
        \item[(\romannum{1})] $M$ is isomorphic to one of $Z_r$, $Z_r^*$, $Z_r\backslash y_r$, and $Z_r\backslash t$ for some $r\geq 3$; or
        \item[(\romannum{2})] $M$ is isomorphic to one of $U_{0,1}$, $U_{1,1}$, $U_{1,2}$, $U_{1,3}$, and $U_{2,3}$.
    \end{enumerate}
\end{theorem}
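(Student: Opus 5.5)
We would prove the two directions separately, treating the forward direction (a $3$-connected binary matroid with no $M(\W_4)$-minor is in the list) by induction on $|E(M)|$ using Seymour's Splitter Theorem, and the converse by checking closure of the list under taking minors.

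\emph{Converse.} The matroids in (\romannum{2}) are too small to have an $M(\W_4)$-minor. For (\romannum{1}), since $M(\W_4)$ has rank $4$ and corank $4$, it suffices to show the following: if $N$ is one of $Z_r$, $Z_r^*$, $Z_r\backslash y_r$ and $Z_r\backslash t$, and $x$ is an element of $N$, then the cosimplification of $N\backslash x$ and the simplification of $N/x$ are, up to parallel and series classes, again in the list with $r$ replaced by $r$ or $r-1$. We would check this from the standard binary representation of $Z_r$, namely the identity matrix augmented by the all-ones vector for $t$, with $y_i=x_i+t$. For example, contracting $x_i$ makes $y_i$ parallel to $t$, and the simplification is $Z_{r-1}$. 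Contracting $t$ gives $r$ parallel pairs, and so on. Duality handles $Z_r^*$. Then every $3$-connected minor of a spike is in the list, and since $M(\W_4)$ is not in the list, none of these matroids has an $M(\W_4)$-minor.

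\emph{Forward direction.} The cases $|E(M)|\le 3$ give exactly the uniform matroids in (\romannum{2}). Otherwise $M$ is simple and cosimple, so $r(M)\ge 3$. In rank $3$, the only binary examples are $M(K_4)\cong Z_3\backslash t\cong Z_3\backslash y_3$ and $F_7=Z_3$; dually $F_7^*=Z_3^*$ occurs in corank $3$. Now assume $r(M),r^*(M)\ge 4$. Since $M$ is binary, it is not a whirl. If $M$ is a wheel, it is $M(\W_n)$ for some $n\ge 4$, which has an $M(\W_4)$-minor. Hence, by the Splitter Theorem applied with $N=M(K_4)$ (binary, not $U_{2,4}$, rank at least $3$), there is an element $e$ such that $M\backslash e$ or $M/e$ is $3$-connected. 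By duality (the list is closed under duality up to isomorphism), we may assume $M\backslash e$ is $3$-connected. By induction, $M\backslash e$ is one of $Z_r$, $Z_r^*$, $Z_r\backslash y_r$, $Z_r\backslash t$.

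\emph{Main step.} It remains to classify the simple binary single-element extensions of these four matroids that have no $M(\W_4)$-minor. This is where we expect most of the work. In the representation above, $e$ is a vector $v\in\GF(2)^r$ not already a column.
\begin{enumerate}
\item[(a)] For $M\backslash e=Z_r\backslash y_r$, the only allowed $v$ should be $y_r$, giving $Z_r$.
\item[(b)] For $M\backslash e=Z_r\backslash t$, the only allowed $v$ should be $t$.
\item[(c)] For $M\backslash e=Z_r$, no extension should be allowed.
\item[(d)] For $M\backslash e=Z_r^*$, the analysis is done in the dual representation, or by passing to coextensions of $Z_r$.
\end{enumerate}
To exclude a vector $v$, we would locate four pairs $\{x_i,y_i\}$ on which $v$ has a suitable pattern, for instance weight $2$ on two of the $x$-coordinates. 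We would then contract the remaining elements $x_j$ and delete the others so that an explicit $M(\W_4)$ appears, with rim and spokes read off from the triangles through $t$ and $e$. The hard part is organizing the patterns of $v$ (its weight and its intersection with the $x$/$y$ coordinates) into few enough cases. We would first reduce by symmetry of the spike: permuting the pairs, and swapping $x_i\leftrightarrow y_i$, which is an automorphism of $Z_r$ fixing $t$. After that, only the weight of $v$ modulo these moves matters.
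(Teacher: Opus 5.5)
The paper does not prove this theorem; it quotes it from Oxley. Your reduction is the natural route and is sound: small cases, then Tutte's Wheels-and-Whirls Theorem (or the Splitter Theorem from $M(K_4)$, whose wheel hypothesis is exactly the absence of an $M(\W_4)$-minor), then duality and induction. This brings everything down to simple binary single-element extensions of $Z_r$, $Z_r^*$, $Z_r\backslash y_r$ and $Z_r\backslash t$. The converse sketch is also fine, though you should add that the listed matroids are $3$-connected. However, the extension analysis, which is the entire content of the forward direction, is only announced. Two facts you intend to use in it are false, and one of them would make the planned argument fail in a case that actually arises.

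\textbf{The symmetry reduction.} Swapping $x_i\leftrightarrow y_i$ on a single leg is not an automorphism of $Z_r$. Since $y_1=x_1+t$ and $x_1+\dots+x_r=t$, the set $\{y_1,x_2,\dots,x_r\}$ is a circuit, but its image $\{x_1,\dots,x_r\}$ is a basis. What is true is that swapping on an even number of legs is an automorphism. It is induced by the linear map sending $x_i\mapsto x_i+t$ on those legs, and this map fixes $t$ exactly when the number of legs is even. For $Z_r$ itself, permuting legs already reduces $v$ to its weight, so little is lost there. In $Z_r\backslash y_r$, however, leg $r$ is distinguished, and you must work with the correct, smaller group.

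\textbf{Prediction (b) fails for $r=4$.} The columns of $Z_4\backslash t$ are exactly the eight odd-weight vectors of $\GF(2)^4$, so $Z_4\backslash t\cong\mathrm{AG}(3,2)$. Its linear stabilizer is transitive on the seven nonzero even-weight vectors. Hence adding, say, $v=x_1+x_2$ gives a copy of $Z_4$: the pairs $\{x_1,x_2\}$, $\{y_1,y_2\}$, $\{x_3,y_4\}$, $\{x_4,y_3\}$ all sum to $v$, so $v$ is the tip of a binary $4$-spike. This extension has no $M(\W_4)$-minor, so you cannot exclude every $v\neq t$ by exhibiting one. The case does occur, because the induction hypothesis only gives $M\backslash e\cong Z_4\backslash t$ via some unspecified isomorphism. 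In each case the target should be that $M$ is isomorphic to a member of the list, not that $v$ is a particular vector. For $r\geq 5$, (b) is correct: contracting a suitable $x_i$ turns $Z_r\backslash t$ into $Z_{r-1}$ and reduces to (c).

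\textbf{Case (d).} You should state its expected outcomes. They are $Z_{r+1}\backslash y_{r+1}$ and $Z_{r+1}\backslash t$, since both $Z_{r+1}\backslash\{y_{r+1},t\}$ and $Z_{r+1}\backslash\{t,x_i\}$ are isomorphic to $Z_r^*$.

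Until the case analysis is carried out with these corrections, the forward direction is not proved.
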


The next lemma says that the hypothesis that every circuit containing both $e$ and $f$ is spanning is inherited by every minor that still contains $e$ and $f$.

\begin{lemma}\label{lem: preserve over taking minors}
    Let $e$ and $f$ be distinct elements of a matroid $M$ such that every circuit of $M$ containing both $e$ and $f$ is spanning. Let $N$ be a minor of $M$ with $\{e,f\}\subseteq E(N)$. Then every circuit of $N$ containing both $e$ and $f$ is spanning in $N$.
\end{lemma}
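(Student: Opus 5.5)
It suffices to handle single-element deletions and contractions, because every minor is reached from $M$ through a sequence of such operations. If the property holds for single-element minors that keep $e$ and $f$, induction on $|E(M)|-|E(N)|$ then gives the lemma. So let $g\in E(M)-\{e,f\}$. I will show that $M\backslash g$ and $M/g$ both inherit the property. Throughout I use that a circuit $C$ of a matroid $M'$ is spanning exactly when $r(C)=r(M')$, that is, when $|C|-1=r(M')$.

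\emph{Deletion.} Let $C$ be a circuit of $M\backslash g$ with $\{e,f\}\subseteq C$. Then $C$ is also a circuit of $M$, so by hypothesis it is spanning in $M$. Hence $r_M(C)=r(M)$. Since $r(M\backslash g)\le r(M)$ and $r_{M\backslash g}(C)=r_M(C)$, we get $r_{M\backslash g}(C)=r(M\backslash g)$. Thus $C$ is spanning in $M\backslash g$.

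\emph{Contraction.} First suppose $g$ is a loop. Then $M/g=M\backslash g$, and the deletion case applies. So assume $g$ is not a loop, which gives $r(M/g)=r(M)-1$. Let $C$ be a circuit of $M/g$ containing $e$ and $f$. By the standard description of the circuits of a contraction, either $C$ or $C\cup g$ is a circuit of $M$.
\begin{enumerate}
\item[(a)] If $C\cup g$ is a circuit of $M$, then it contains $e$ and $f$, so it is spanning. Hence $|C|=r(M)=r(M/g)+1$. Since $C$ is a circuit of $M/g$, it follows that $r_{M/g}(C)=|C|-1=r(M/g)$, so $C$ is spanning in $M/g$.
\item[(b)] If $C$ is a circuit of $M$, then it is spanning in $M$, so $g\in \mathrm{cl}_M(C)$. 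Therefore
\[
r_{M/g}(C)=r_M(C\cup g)-1=r(M)-1=r(M/g),
\]
and again $C$ is spanning in $M/g$.
\end{enumerate}

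The only point that needs care is case (b) of the contraction, where $C$ remains a circuit of $M/g$ without $g$ being added. There the key step is that $g$ lies in the closure of $C$ because $C$ spans $M$. Everything else is routine rank bookkeeping.
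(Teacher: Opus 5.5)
Your proof is correct and follows essentially the paper's argument: lift a circuit $C$ of the minor to a circuit $C'$ of $M$ with $C\subseteq C'\subseteq C\cup Y$, use that $C'$ spans $M$, and finish with rank bookkeeping. The paper does this in one step for $N=M\backslash X/Y$ by showing that $C\cup Y$ spans $M\backslash X$, whereas you induct on single-element deletions and contractions; incidentally, your case (b) never occurs for a non-loop $g$, since $g\in\mathrm{cl}_M(C)-C$ would give $r_{M/g}(C)=|C|-2$, but your computation there is still valid.
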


\begin{proof}
    Write $N=M\backslash X/Y$, where $X$ and $Y$ are disjoint subsets of $E(M)-\{e,f\}$. Let $C$ be a circuit of $N$ containing both $e$ and $f$. Then there is a circuit $C'$ of $M\backslash X$ such that
    \[
        C\subseteq C'\subseteq C\cup Y.
    \]
    Since circuits of $M\backslash X$ are circuits of $M$, the set $C'$ is a circuit of $M$ containing both $e$ and $f$. Hence $C'$ is spanning in $M$. As $C'\subseteq E(M)-X$, this implies that $C'$ is spanning in $M\backslash X$. Since $C'\subseteq C\cup Y$, the set $C\cup Y$ is spanning in $M\backslash X$. Therefore
    \[
        r_N(C)
        = r_{M\backslash X}(C\cup Y)-r_{M\backslash X}(Y)
        = r(M\backslash X)-r_{M\backslash X}(Y)
        = r(N).
    \]
    Thus $C$ is spanning in $N$.
\end{proof}

The matroid $Z_r\backslash y_r$ is represented over $\GF(2)$ by the matrix $A$ in Figure~\ref{fig: matrix A}.

\begin{center}
    \begin{figure}[htb]
    \hbox to \hsize{
	\hfil
    $
    \bordersquare
    {
          &x_1&x_2&x_3&\cdots&x_{r-1}&x_r&y_1&y_2&y_3&\cdots&y_{r-1}&t\cr
          &1&0&0&\cdots&0&0&0&1&1&\cdots&1&1\cr
          &0&1&0&\cdots&0&0&1&0&1&\cdots&1&1\cr
          &0&0&1&\cdots&0&0&1&1&0&\cdots&1&1\cr
          &\vdots&\vdots&\vdots&\ddots&\vdots&\vdots&\vdots&\vdots&\vdots&\ddots&\vdots&\vdots\cr
          &0&0&0&\cdots&1&0&1&1&1&\cdots&0&1\cr
          &0&0&0&\cdots&0&1&1&1&1&\cdots&1&1\cr
    }
    $
	\hfil
    }
    \caption{A matrix $A$ that represents $Z_r \backslash y_r$ over $\GF(2)$.}\label{fig: matrix A}
    \end{figure}
    \end{center}
Since $A$ has the form $\left[\, I_r \mid D \,\right]$ where $D$ is symmetric, we deduce that $Z_r\backslash y_r$ is self-dual. Moreover, in $(Z_r\backslash y_r)^*$, the element $x_r$ is the tip. Sometimes, $Z_r\backslash y_r$ is called a rank-$r$ binary spike with tip $t$ and \emph{cotip} $x_r$.

We now characterize the $3$-connected binary matroids with two prescribed elements in no non-spanning circuits. We note that this occurs precisely when $M$ is a rank-$r$ binary spike with a tip and a cotip and that the pair $\{e,f\}$ consists of the tip and the cotip.

\begin{theorem}\label{thm: the 3-con case}
    Let $M$ be a $3$-connected binary matroid with at least four elements. Let $e$ and $f$ be distinct elements of $M$, and suppose that every circuit containing both $e$ and $f$ is spanning. Then $M\cong Z_r\backslash y_r$ for some $r\geq 3$. Moreover, there is an isomorphism from $M$ to $Z_r\backslash y_r$ under which $\{e,f\}$ is mapped to $\{t,x_r\}$.
\end{theorem}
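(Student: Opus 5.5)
First we rule out an $M(\W_4)$-minor. Suppose $M$ has one. Since $M$ is $3$-connected and binary, Lemma~\ref{lem: roundness of 4-wheel} gives a minor $N\cong M(\W_4)$ with $\{e,f\}\subseteq E(N)$. By Lemma~\ref{lem: preserve over taking minors}, every circuit of $N$ containing $e$ and $f$ is spanning in $N$. We then check that $M(\W_4)$ has no such pair. For any two edges of the $4$-wheel, the rim, a triangle, or a $4$-cycle through a hub and two rim vertices already gives a non-Hamiltonian cycle, that is, a circuit of size at most $4$ and rank at most $3<4$, through both edges. This case check is finite and uses the symmetry of $\W_4$; the edge pairs fall into a few orbits: spoke--spoke, spoke--rim, and rim--rim, each adjacent or not. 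So $M$ has no $M(\W_4)$-minor. As $|E(M)|\ge 4$, Theorem~\ref{thm: no W4-minor} gives $M\cong Z_r$, $Z_r^*$, $Z_r\backslash y_r$, or $Z_r\backslash t$ for some $r\ge 3$.

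Next we eliminate $Z_r$, $Z_r^*$, and $Z_r\backslash t$, and locate $\{e,f\}$ in $Z_r\backslash y_r$, by exhibiting small circuits. In $Z_r$, with $r\ge3$, every pair lies in a non-spanning circuit. Pairs containing $t$ lie in a triangle or in $\{t,x_i,x_j,y_k\}$-type sets, and other pairs lie in $\{x_i,y_i,x_j,y_j\}$-type $4$-circuits, which have rank $3$. When $r=3$ the $4$-circuits have rank $3=r$, so that case needs care: $Z_3=F_7$, where every pair lies in a triangle. In $Z_r\backslash t$ the $4$-circuits $\{x_i,y_i,x_j,y_j\}$ still cover every pair except possibly $\{x_i,y_i\}$, and that pair lies in $\{x_i,y_i,x_j,y_j\}$ anyway. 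Rank $3$ is non-spanning once $r\ge4$; for $r=3$, $Z_3\backslash t\cong M(K_4)$, and every pair of edges of $K_4$ lies in a triangle or in a $4$-cycle, which has rank $3=r$. Here I expect to need care: pairs of opposite edges of $K_4$ lie only in $4$-cycles, which are spanning. But $Z_3\backslash t\cong M(K_4)=M(\W_3)$; I would double-check whether the theorem intends $r\ge3$ to include this. Most likely I would instead show that $M(K_4)$ occurs as $Z_3\backslash y_3$, since $Z_3\backslash y_3\cong Z_3\backslash t\cong M(K_4)$, with $\{t,x_3\}$ a pair of opposite edges. For $Z_r^*$, self-duality of $Z_r$ in the binary case (for $r\ge 4$, $Z_r^*\cong Z_r$ only when $r$ is suitable; otherwise $Z_r^*$ is the free-type binary spike dual) requires computing circuits directly. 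I would use the representation: $Z_r^*$ is represented by the transpose form, and one shows that any two elements lie in a circuit of size at most $r$ via small cocircuits of $Z_r$, whose complements are hyperplanes.

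Finally, in $M=Z_r\backslash y_r$ we show that $\{e,f\}$ must be $\{t,x_r\}$ up to automorphism. Using the matrix $A$, a pair like $\{x_i,y_i\}$ with $i<r$ lies in the triangle $\{x_i,y_i,t\}$. A pair $\{x_i,x_j\}$ or $\{x_i,y_j\}$ lies in the rank-$3$ circuit $\{x_i,y_i,x_j,y_j\}$ when $i,j<r$, and this is non-spanning for $r\ge4$. Pairs involving $t$ other than $\{t,x_r\}$ lie in triangles. Pairs involving $x_r$ are handled via self-duality, since $x_r$ is the tip of $M^*$, reducing to the tip analysis. What remains is to verify that every circuit containing $\{t,x_r\}$ is spanning. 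The elements $t$ and $x_r$ span the line whose third point $y_r$ was deleted, so any circuit through both must use an odd number of $x_i/y_i$ elements from each remaining pair, in a way that forces rank $r$. I would prove this by a parity argument on the columns of $A$. This final verification and the small-$r$ boundary cases are the main obstacles.
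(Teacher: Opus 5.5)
Your overall plan matches the paper's: exclude an $M(\W_4)$-minor via Lemma~\ref{lem: roundness of 4-wheel} and Lemma~\ref{lem: preserve over taking minors}, apply Theorem~\ref{thm: no W4-minor}, eliminate $Z_r$, $Z_r^*$ and $Z_r\backslash t$ using small circuits, then locate $\{e,f\}$ in $Z_r\backslash y_r$.

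\textbf{The gap.} The step that disposes of pairs $\{x_r,z\}$ with $z\neq t$ (for $r\ge 4$) ``via self-duality'' does not work. The hypothesis concerns circuits of $M$ only and is not preserved under duality. For $r\ge 4$, $t$ is the unique element of $M$ in $r-1$ triangles, and $x_r$ is the unique element in none. Hence any isomorphism $\psi\colon M\to M^*$ sends $t\mapsto x_r$ and $x_r\mapsto t$. Transporting your tip analysis (every pair $\{t,w\}$ with $w\neq x_r$ lies in a triangle of $M$) therefore only says that every pair $\{x_r,z\}$ with $z\neq t$ lies in a triangle of $M^*$, that is, in a \emph{triad} $\{x_r,x_j,y_j\}$ of $M$. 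That gives no non-spanning circuit of $M$ through $\{x_r,z\}$, which is what you need.

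\textbf{The repair} is direct and is what the paper does. In $A$ we have $x_i=e_i$, $y_i=\mathbf{1}+e_i$ and $t=\mathbf{1}$. A transversal $\{z_1,\dots,z_r\}$ of the pairs $\{x_i,y_i\}$ with $z_r=x_r$ sums to $(k+1)\mathbf{1}$, where $k$ is the number of $y$'s in it. For $z\in\{x_i,y_i\}$ with $i<r$, pick some index $j\notin\{i,r\}$ (one exists since $r\ge 3$) and use $z_j$ to make $k$ odd. This gives an $r$-element circuit of rank $r-1$ containing $x_r$ and $z$.

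\textbf{The $Z_r^*$ case} needs correcting, though the idea survives. $Z_r^*$ has rank $r+1$, so it is never isomorphic to $Z_r$. Also, your target ``a circuit of size at most $r$'' is false for pairs through $t$. The cocircuits of $Z_r$ are of two kinds:
\begin{enumerate}
\item[(a)] the sets $\{x_i,y_i : i\in S\}$ with $|S|$ even and nonzero;
\item[(b)] the sets consisting of $t$ together with a transversal having an odd number of $x$'s.
\end{enumerate}
So every cocircuit of $Z_r$ through $t$ has size exactly $r+1$. What rescues you is that an $(r+1)$-element circuit of the rank-$(r+1)$ matroid $Z_r^*$ is still non-spanning, and these cocircuits cover every pair once $r\ge 3$. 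By contrast, the paper avoids this computation: every pair of $Z_r$ lies in a $Z_3\cong F_7$ minor, so every pair of $Z_r^*$ lies in an $F_7^*$-minor, where it sits in a $4$-element circuit.

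\textbf{Smaller points.} Your final ``verification'' that every circuit through $\{t,x_r\}$ is spanning is the converse direction and is not part of this statement. For $r=3$, you still need that $\mathrm{Aut}(K_4)$ is transitive on pairs of opposite edges in order to get an isomorphism sending $\{e,f\}$ to $\{t,x_3\}$.
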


\begin{proof}
    Suppose first that $M$ has an $M(\W_4)$-minor. By Lemma~\ref{lem: roundness of 4-wheel}, $M$ has a minor $N$ such that $N\cong M(\W_4)$ and $\{e,f\}\subseteq E(N)$. It is straightforward to check that $N$ has a non-spanning circuit containing both $e$ and $f$, contradicting Lemma~\ref{lem: preserve over taking minors}. Thus $M$ has no $M(\W_4)$-minor.

    Since $M$ has at least four elements, by Theorem~\ref{thm: no W4-minor}, $M$ is isomorphic to one of
    \[
        Z_r,\quad Z_r^*,\quad Z_r\backslash y_r,\quad Z_r\backslash t
    \]
    for some $r\geq 3$. Observe that, in the Fano matroid $F_7$, every pair of elements is contained in a triangle, and, in $F_7^*$, every pair of elements is contained in a $4$-element circuit. These circuits are non-spanning. Hence, by Lemma~\ref{lem: preserve over taking minors}, $M$ has no $F_7$-minor or $F_7^*$-minor containing both $e$ and $f$.

    Suppose that $M\cong Z_r$ or $M\cong Z_r^*$ for some $r\geq 3$. Since $Z_3\cong F_7$ and, for each $r\geq 4$ and each pair $\{x,y\}\subseteq E(Z_r)$, there is a $Z_3$-minor of $Z_r$ containing $\{x,y\}$, it follows that $M$ has an $F_7$-minor or an $F_7^*$-minor containing both $e$ and $f$, a contradiction.

    We may now assume that $M\cong Z_r\backslash y_r$ or $M\cong Z_r\backslash t$ for some $r\geq 3$. Suppose that $M\cong Z_r\backslash t$. If $r\geq 4$, then every pair of elements is contained in a $4$-element circuit, which is non-spanning, a contradiction. If $r=3$, then
    \[
        Z_3\backslash t\cong Z_3\backslash y_3.
    \]
    Thus $M\cong Z_r\backslash y_r$ for some $r\geq 3$.

    It remains to identify the pair $\{e,f\}$ in $Z_r\backslash y_r$. If $r=3$, then $Z_3\backslash y_3\cong M(K_4)$. Since $\{e,f\}$ is not contained in a triangle of $Z_3\backslash y_3$, the corresponding pair of edges in $K_4$ forms a matching. It is straightforward to check that there is an isomorphism from $M$ to $Z_3\backslash y_3$ under which $\{e,f\}$ is mapped to $\{t,x_3\}$.

    Now suppose that $r\geq 4$. If $x_r\notin\{e,f\}$, then $\{e,f\}$ is contained in either a triangle or a $4$-element circuit, a contradiction. Thus we may assume that $e=x_r$. If $f\neq t$, then $\{e,f\}$ is contained in a non-spanning $r$-element circuit, again a contradiction. Hence $\{e,f\}=\{x_r,t\}$.
\end{proof}

\section{The proof of the main result}\label{sec: proof of the main thm}

In this section, we prove the following strengthening of Theorem~\ref{thm: main (short)}. We also prove Corollary~\ref{cor: regular matroids}.

\begin{theorem}\label{thm: main (detailed)}
    Let $M$ be a simple connected binary matroid, and let $e$ and $f$ be distinct elements of $M$. Then every circuit of $M$ containing $\{e,f\}$ is spanning if and only if the canonical tree decomposition of $M$ is a path $\cP$ with vertices, in order, $P_0,P_1,\dots,P_n$ where
    \begin{enumerate}
        \item[(\romannum{1})] each vertex is labeled by a circuit of size at least three, by $U_{1,3}$, or by a copy of $Z_r\backslash y_r$ for some $r\geq 3$;
        \item[(\romannum{2})] neither $P_0$ nor $P_n$ is labeled by $U_{1,3}$;
        \item[(\romannum{3})] if $n=0$ and $P_0$ is a binary spike with a tip and a cotip, this pair of elements is $\{e,f\}$;
        \item[(\romannum{4})] if $n\geq 1$, then one of $e$ and $f$ is in $E(P_0)$ and the other is in $E(P_n)$, and, when $P_i$ is labeled by a spike with a tip and a cotip,
        \begin{enumerate}
            \item[(a)] for $0<i<n$, the tip and the cotip are the two basepoints of $P_i$, and
            \item[(b)] for $i\in\{0,n\}$, the pair consisting of the tip and the cotip coincides with the pair consisting of the single basepoint in $P_i$ and the single member of $\{e,f\}$ in $P_i$.
        \end{enumerate}
    \end{enumerate}
\end{theorem}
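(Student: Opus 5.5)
We prove the two directions separately. Both rest on the standard description of circuits of a $2$-sum: if $M=M_1\oplus_2 M_2$ with basepoint $p$, then the circuits of $M$ are the circuits of $M_1$ or $M_2$ avoiding $p$, together with the sets $(C_1-p)\cup(C_2-p)$ where $C_i$ is a circuit of $M_i$ containing $p$. Moreover, $r(M)=r(M_1)+r(M_2)-1$. Consequently, such a composite circuit is spanning in $M$ if and only if each $C_i$ is spanning in $M_i$. Iterating this along a tree decomposition, every circuit $C$ of $M$ corresponds to a subtree $\cT_C$ of the decomposition, with a circuit $C_P$ in each vertex $P\in\cT_C$ that uses exactly the basepoints on tree edges of $\cT_C$. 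Then $C$ is spanning precisely when $\cT_C$ is the whole tree and every $C_P$ is spanning in $P$.

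For the forward direction, assume every circuit containing $\{e,f\}$ is spanning, and let $\cT$ be the canonical tree decomposition.

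\emph{Step 1.} If $\cT$ has one vertex, then $M$ is either a circuit or $3$-connected; the latter case is handled by Theorem~\ref{thm: the 3-con case}. The vertex cannot be a cocircuit, since $M$ is simple.

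\emph{Step 2.} Suppose $\cT$ has at least two vertices. Let $P_e$ and $P_f$ be the vertices containing $e$ and $f$. Every vertex $P$ is connected, so for any tree path joining $P_e$ to $P_f$ we can choose a circuit through $e$ (respectively $f$, respectively the two relevant basepoints) in each vertex of the path and glue them. This yields a circuit of $M$ containing $\{e,f\}$ whose subtree is the path. Hence the path must be all of $\cT$, so $\cT$ is a path with $P_e$ and $P_f$ at its ends. If $P_e=P_f$ and $\cT$ had another vertex, a circuit of $P_e$ through $e,f$ avoiding the basepoint would be non-spanning in $M$. Such a circuit exists in a connected $P_e$ unless every circuit through $e,f$ uses the basepoint; ruling this out, and more generally handling all such cases, is exactly what the minor argument below does. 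Next, each vertex $P_i$ inherits the following property: every circuit containing its two \emph{designated} elements (basepoints, or $e$/$f$ at the ends) is spanning in $P_i$. Indeed, $P_i$ is a minor of $M$ in which the other branches are contracted to their basepoints, and the designated elements can be realized as $e$ and $f$ after choosing circuit paths through the other vertices. More simply, a non-spanning circuit in $P_i$ through the designated pair glues to a non-spanning circuit of $M$ through $\{e,f\}$.

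\emph{Step 3.} We apply Theorem~\ref{thm: the 3-con case} to each $3$-connected label, which gives $Z_r\backslash y_r$ with the designated pair equal to $\{t,x_r\}$. This yields (iii) and (iv). Labels that are circuits are allowed. Cocircuit labels are $U_{1,m}$. With two designated elements, any further element $g$ would give a $2$-circuit through a basepoint and $g$, producing a non-spanning circuit. So $m=3$, and at an end a $U_{1,3}$ would make $e$ parallel to something, contradicting simplicity; this gives (ii). Simplicity also excludes $U_{1,2}$, and the canonical decomposition excludes adjacent circuits and adjacent cocircuits.

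For the converse, we use the circuit description from the first paragraph. A circuit containing $e\in P_0$ and $f\in P_n$ must have subtree equal to the whole path. In each vertex it uses a circuit through the designated pair. For circuit labels this circuit is the whole label. For $U_{1,3}$ it is a $2$-circuit, which is spanning. For $Z_r\backslash y_r$, with $\{t,x_r\}$ designated, every circuit through both is spanning; this can be checked from the matrix $A$ of Figure~\ref{fig: matrix A}. Writing $t$ as a sum of columns, any dependency containing $x_r$ and $t$ must involve $r-1$ further columns and so has size $r+1$. When $n=0$ and the vertex is a spike, this is exactly (iii). The main obstacle is Step 2: carefully justifying that each vertex inherits the hypothesis with respect to its designated pair, and that $\cT$ cannot branch or place $e$ and $f$ internally. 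Both follow from explicitly gluing non-spanning circuits, which exist whenever the local hypothesis fails.
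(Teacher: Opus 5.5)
There is a genuine gap in your Step 2, and it sits exactly where the paper does its real work. To show that $\cT$ is the $e$--$f$ path, you glue, in each vertex of the path, a circuit through the two designated elements, and you assert that the result has the path as its subtree. That requires each chosen circuit to avoid every basepoint leading off the path, and connectivity of the vertex does not give this.

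If a path vertex is labeled by a circuit, its only circuit is its whole ground set. So every circuit of $M$ through $\{e,f\}$ is forced into each branch hanging off that vertex. For example, let $\cT$ be a $4$-element circuit on $\{e,f,g,q\}$ joined along $q$ to $M(K_4)$. Then no circuit of $M$ through $\{e,f\}$ stays inside the vertex containing $e$ and $f$. The hypothesis does fail there, but only because of non-spanning circuits of $M(K_4)$ through $q$, which your argument never looks for.

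Your deferral to ``the minor argument below'' does not resolve this. Inheritance of the hypothesis by the vertices of a path says nothing about branches. Your closing claim that the needed non-spanning circuits ``exist whenever the local hypothesis fails'' is precisely what has to be proved.

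The missing idea is to analyze a component $\cR$ of $\cT-V(\cP)$, attached by basepoint $p$ to a path vertex $N$, with $\cQ=\cT-V(\cR)$. The argument runs as follows.
\begin{enumerate}
\item[(a)] Simplicity forces $r(M(\cR))\geq 2$. Hence a circuit of $M(\cQ)$ through $e,f$ that avoids $p$ would be non-spanning in $M$.
\item[(b)] If no such circuit exists, gluing shows every circuit of $M(\cR)$ through $p$ is spanning. Also $M(\cR)$ is simple, so Lemma~\ref{lem: one element in no nonspanning circuits} makes $M(\cR)$ a circuit. Thus $\cR$ is a single circuit vertex.
\item[(c)] Canonicity then says $N$ is not a circuit. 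So $N\backslash p$, and hence $M(\cQ)\backslash p$, is connected, and it has a circuit through $e$ and $f$ avoiding $p$. This contradicts (b).
\end{enumerate}
This argument also covers the case $P_e=P_f$.

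Two smaller points.
\begin{enumerate}
\item[(1)] Your general claim that a circuit is spanning only if its subtree is all of $\cT$ is false without simplicity, since a pendant cocircuit vertex adds nothing to the rank. You need the observation in (a) that $r(M(\cR))\geq 2$.
\item[(2)] In Step 3, a $2$-circuit $\{p,g\}$ in a $U_{1,m}$ vertex glues only to circuits containing at most one of $e,f$, so it yields no contradiction. The correct reason interior cocircuits are $U_{1,3}$ is that two non-basepoint elements would be parallel in $M$.
\end{enumerate}
Your sketch of the converse, which the paper omits, is fine.
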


\begin{proof}
We first show that if the only circuits of $M$ containing $\{e,f\}$ are spanning, then $M$ is as described. Let $\cT$ be the canonical tree decomposition of $M$. Let $\cP$ be the minimal subtree of $\cT$ containing the vertices that contain $e$ and $f$. First, we prove that

\begin{sublemma}\label{sublem: T=P}
    $\cT=\cP$.
\end{sublemma}

Suppose that $\cT - V(\cP)$ has a component $\cR$. Let $\cQ=\cT - V(\cR)$, and let $p$ be the edge of $\cT$ that meets both $\cR$ and $\cQ$. Let $M(\cR)$ and $M(\cQ)$ be the matroids corresponding to $\cR$ and $\cQ$. Then
\[
    M = M(\cR) \oplus_2 M(\cQ)
\]
where $p$ is the basepoint of this $2$-sum. Observe that $\{e,f\}\subseteq E(M(\cQ))$.

If $r(M(\cR))=1$, then, since $\cT$ is a canonical tree decomposition, $\cR$ is labeled by a cocircuit. Hence $E(M(\cR))-\{p\}$ contains at least two elements, and any two such elements form a circuit of $M(\cR)$ avoiding $p$. This circuit is also a circuit of $M$, so $M$ is not simple, a contradiction. Thus $r(M(\cR))\geq 2$.

Suppose $M(\cQ)$ has a circuit $C$ containing both $e$ and $f$ but not $p$. Then $C$ is also a circuit of $M$. Moreover,
\[
    r_M(C) \leq r(M(\cQ)) < r(M),
\]
since
\[
    r(M)=r(M(\cQ))+r(M(\cR))-1
\]
and $r(M(\cR))\geq 2$. This contradiction shows that every circuit of $M(\cQ)$ containing both $e$ and $f$ also contains $p$.

Let $D$ be a circuit of $M(\cR)$ containing $p$. If $D$ is not spanning in $M(\cR)$, then, for every circuit $C$ of $M(\cQ)$ containing both $e$ and $f$, the set
\[
    (C\cup D)-p
\]
is a non-spanning circuit of $M$ containing $\{e,f\}$, a contradiction. Thus every circuit of $M(\cR)$ containing $p$ is spanning.

We now show that $M(\cR)$ is simple. Since $M(\cR)$ is connected and $M$ is simple, if $\{x,y\}$ is a two-element circuit of $M(\cR)$, then $p\in\{x,y\}$, say $\{x,y\}=\{p,x\}$. Let $C$ be a circuit of $M(\cQ)$ containing $\{e,f\}$. Then $(C-\{p\})\cup\{x\}$ is a circuit of $M$ that is non-spanning, a contradiction. Hence $M(\cR)$ is simple.

We now know that $M(\cR)$ is a simple connected binary matroid with at least two elements. By Lemma~\ref{lem: one element in no nonspanning circuits}, $M(\cR)$ is a circuit. Hence $\cR$ consists of a single vertex labeled by a circuit.

Let $N$ be the vertex of $\cQ$ adjacent to $\cR$. Since $\cT$ is a canonical tree decomposition, $N$ is not labeled by a circuit. Thus $N$ is labeled either by a cocircuit or by a $3$-connected matroid. In both cases, $N\backslash p$ is connected, and hence $M(\cQ)\backslash p$ is connected. Therefore $M(\cQ)\backslash p$ has a circuit $C$ containing both $e$ and $f$. This circuit is also a circuit of $M$, and it is non-spanning, a contradiction. We conclude that~\ref{sublem: T=P} holds.\\

Since $\cP$ is the minimal subtree of $\cT$ containing the vertices that contain $e$ and $f$, it follows that $\cP$ is a path with vertices
\[
    P_0,P_1,\ldots,P_n,
\]
in order, where $e\in E(P_0)$ and $f\in E(P_n)$. If $n=0$, the result follows immediately from Theorem~\ref{thm: the 3-con case}. Hence we may assume that $n\geq 1$. For each $i\in\{1,2,\dots,n\}$, let $p_i$ be the edge of $\cP$ joining $P_{i-1}$ and $P_i$. Then the circuits of $M$ that contain $\{e,f\}$ are precisely the sets of the form
\[
    C_0\triangle C_1\triangle\dots\triangle C_n,
\]
where $C_0$ is a circuit of $P_0$ containing $\{e,p_1\}$, and $C_n$ is a circuit of $P_n$ containing $\{p_n,f\}$, while $C_i$ is a circuit of $P_i$ containing $\{p_i,p_{i+1}\}$ for each $i\in\{1,2,\dots,n-1\}$.

If, for some $i\in\{0,1,\dots,n\}$, the circuit $C_i$ is non-spanning in $P_i$, then
\[
    C_0\triangle C_1\triangle\dots\triangle C_n
\]
is non-spanning in $M$, a contradiction. Thus no $P_i$ has a non-spanning circuit containing its two distinguished elements, namely $\{e,p_1\}$ in $P_0$, the pair $\{p_i,p_{i+1}\}$ in $P_i$ for $1\leq i\leq n-1$, and $\{p_n,f\}$ in $P_n$.

Because $M$ is simple and $\cT$ is the canonical tree decomposition of $M$, Theorem~\ref{thm: the 3-con case} implies that every $P_i$ is labeled by a circuit, by $U_{1,3}$, or by a matroid isomorphic to $Z_r\backslash y_r$ for some $r\geq 3$. The identification of the distinguished elements follows immediately from Theorem~\ref{thm: the 3-con case}. We conclude that if every circuit of $M$ containing $\{e,f\}$ is spanning, then $M$ is as described in the theorem. We omit the routine proof that if $M$ is as described in the theorem, then every circuit containing $\{e,f\}$ is spanning.
\end{proof}

We conclude by obtaining Corollary~\ref{cor: regular matroids} as a consequence of Theorem~\ref{thm: main (detailed)}.

\begin{proof}[Proof of Corollary~\ref{cor: regular matroids}]
Let $\cT$ be the canonical tree decomposition of $M$. Suppose that $\cT$ has a vertex labeled by $N$, where
\[
    N\cong Z_r\backslash y_r
\]
for some $r\geq 4$. Since vertex labels in the canonical tree decomposition are minors of $M$, the matroid $M$ has an $N$-minor. But $Z_r\backslash y_r$ has an $F_7$-minor whenever $r\geq 4$. Hence $r=3$. Since
\[
    Z_3\backslash y_3 \cong M(K_4),
\]
and every isomorphism from $Z_3\backslash y_3$ to $M(K_4)$ maps the distinguished pair $\{t,x_3\}$ to a pair of non-adjacent edges of $K_4$, the path decomposition given by Theorem~\ref{thm: main (detailed)} is precisely the decomposition of a clean ladder. Hence $M$ is the cycle matroid of a clean ladder.
\end{proof}

\end{document}